\newtheorem{Theorem}{Theorem}[section]
\newtheorem{Lemma}[Theorem]{Lemma}
\newtheorem{Proposition}[Theorem]{Proposition}
\newcommand{\Rn}{{\mathbb R}^{N}}
\newcommand{\N}{\mathbb{N}}
\newcommand{\al} {\alpha}
\newcommand{\la} {\lambda}
\newcommand{\pa}{\partial}
\newcommand{\na} {\nabla}
\newcommand{\no}{\nonumber}
\newcommand{\bdw}{\partial\Omega}
\newcommand{\R}{\mathbb{R}}
\newcommand{\deb}{\rightharpoonup}
\newcommand{\Iom}{\displaystyle\int_{\Omega}}
\begin{document}

\title{Infinitely many sign changing solutions of an elliptic problem involving critical Sobolev and Hardy-Sobolev exponent}

\author{Mousomi Bhakta
\footnote{Dept of Mathematics, Indian Institute of Science Education and Research, Pashan, Pune-411008, India. E-mail:  {\it mousomi@iiserpune.ac.in}}}

\date{}

\maketitle

\begin{abstract}
\noindent
\footnotesize
We study the existence and multiplicity of sign changing solutions of the following equation 
\begin{equation*}
\begin{cases}
-\Delta u = \mu |u|^{2^{\star}-2}u+\frac{|u|^{2^{*}(t)-2}u}{|x|^t}+a(x)u \quad\text{in}\quad \Omega, \\
 u=0 \quad\text{on}\quad\bdw,
\end{cases}
\end{equation*} where $\Omega$ is a bounded domain in $\Rn$, $0\in\bdw$, all the principal curvatures of $\bdw$ at $0$ are negative and $\mu\geq 0, \ \ a>0, \ \ N\geq 7, \ \ 0<t<2$, \ \ $2^{\star}=\frac{2N}{N-2}$ and $2^{\star}(t)=\frac{2(N-t)}{N-2}$. 

\bigskip

\noindent
\textbf{Keywords:} {sign changing solution, multiple critical exponent, Hardy-Sobolev,  infinitely many solutions.}
\medskip

\noindent
\textit{2010 Mathematics Subject Classification:} {35B33, 35J60.}
\end{abstract}

\section{Introduction}\label{intro} In this article we study the following elliptic problem:
\begin{equation}\label{eq:prob}
\begin{cases}
 - \Delta u = \mu|u|^{2^{\star}-2}u+\frac{|u|^{2^{\star}(t)-2}u}{|x|^t}+a(x)u \quad\text{in}\quad \Omega,\\
u = 0  \quad\text{on}\quad\bdw, \\
 \end{cases}\end{equation}
where $\Omega$ is a bounded domain in $\Rn$, $\bdw$ is $C^3$, $0\in\bar\Omega$ and
\begin{equation}\label{assump}
\mu\geq 0,\ \ a\in C^1(\bar\Omega), \ \ a>0, \ \  0<t<2, \quad \text{and} \ \ 2^{\star}(t)=\frac{2(N-t)}{N-2}.
\end{equation}
Here $2^{\star}$ is the usual critical sobolev exponent $\frac{2N}{N-2}$. 

By a solution of the above equation we mean $u \in  H_0^1 (\Omega)$ satisfying
$$\int\limits_{\Omega}\nabla u \cdot \nabla v dx = \mu\int\limits_{\Omega}|u|^{2^{\star}-2}uv dx+\frac{|u|^{2^{\star}(t)-2}uv}{|x|^t} dx+\int_{\Omega}a(x)uv dx \quad \forall v \in  H_0^1 (\Omega). $$

Equivalently, $u$ is a critical point of the functional $I$,
\begin{equation}
 I(u) = \frac12 \int\limits_{\Omega}|\nabla u|^2\; dx-  \frac{1}{2} \int\limits_{\Omega}a(x)|u|^2 \; dx - \frac{\mu}{2^{*}}\Iom|u|^{2^{*}} \; dx-\frac{1}{2^{*}(t)} \int\limits_{\Omega}\frac{|u|^{2^{*}(t)}}{|x|^t}\;dx.
\end{equation}
$I$ is a well defined $C^1 $ functional on $H_0^1 (\Omega)$ for any open subset of $\R^N$, thanks to the following Hardy-Sobolev Inequality ,

\textbf {Hardy-sobolev inequality}: Let $N\geq 3$, $0\leq t<2$. Then there exist a positive constant $C=C(N,t)$ such that 
\begin{equation}\label{Sobolev}
\left(\int\limits_{\R^N} \frac {|u|^{2^{\star}(t)}} {|x|^t} dx \right)^{\frac{2}{2^{*}(t)}} \leq   
C \int\limits_{\R^N}|\nabla u|^2  dx \quad\forall\quad u\in C^{\infty}_0(\Rn). 
\end{equation}

Equation \eqref{eq:prob} involves multiple critical exponents, namely, critical Sobolev exponent and Hardy-Sobolev exponent. In recent years a lot of attention has been given to the existence of nontrivial solutions of problem \eqref{eq:prob}. As it is mentioned in \cite{YY}, one can apply the pioneering idea of Brezis and Nirenberg \cite{BN}, to obtain a positive solution of \eqref{eq:prob}. 

\vspace{2mm}

When $\Omega=\Rn$ and the function $a$ is singular at the origin, existence of positive solution to  more general type equations involving multiple critical exponents was studied by Fillippucci, et all \cite{FPR} using Mountain Pass Lemma of Ambrosetti and Rabinowitz \cite{AR}. %We also refer \cite{Bh}, where using the similar technique as in \cite{FPR}, existence of nontrivial solution was proved for semilinear elliptic equations involving biharmonic operator and multiple critical exponents.  
There the crucial step is to show that the mountain pass value is strictly less than the first energy level at which the Palais-Smale condition fails. For the existence of the mountain pass solution of \eqref{eq:prob}, we also refer \cite{LL} and the references there-in. As it is pointed out in \cite{YY} that, when $0\in\bdw$, the mean curvature of $\bdw$ at $0$ plays an important role in the existence of mountain pass solutions (see \cite{CL}, \cite{GK}, \cite{GR}, \cite{LL}).

\vspace{2mm}

In \cite{YY}, Yan and Yang have considered the problem \eqref{eq:prob} with $a\in C^1(\bar\Omega)$ and $a(0)>0$. They have proved the existence of infinitely many solutions using the compactness of the solutions of Brezis-Nirenberg type problem established by  Devillanova and Solimini \cite{DS} for $N\geq 7$. But \cite{YY} does not have any information about the existence and multiplicity of sign changing solutions. Also, it is wroth mentioning that, one can not obtain the existence of infinitely many sign changing solutions of \eqref{eq:prob} by adopting the method of \cite{YY}. Therefore
a natural question which is still open
is whether \eqref{eq:prob} has infinitely many sign changing solutions for any $a\in C^1(\bar\Omega)$ such that $a>0$.

\vspace{2mm}

A very important result by Schechter and Zou \cite{SZ} asserts that there exists infinitely many sign changing solutions to the Brezis-Nirenberg problem in higher dimension. Very recently this kind of technique were also used in \cite{G} and \cite {GS} to prove the existence of infinitely many sign changing solutions of Hardy-Sobolev-Maz'ya type equations and Brezis-Nirenberg problem in hyperbolic space respectively. 

\vspace{2mm}

So far in the literature the only two papers that deal with the sign changing solution of \eqref{eq:prob} are \cite{CP} and \cite {CZ}. In \cite{CP}, a pair of sign changing solutions and in \cite{CZ} infinitely many sign changing solutions were obtained for \eqref{eq:prob} when $t=2$, the function $a$ is constant and $0\in\Omega$. More precisely in \cite{CZ}, the following equation problem was studied:
\begin{equation}\label{eq:cz-prob}
\begin{cases}
 - \Delta u = |u|^{2^{\star}-2}u+\mu\frac{u}{|x|^2}+\la u \quad\text{in}\quad \Omega,\\
u = 0  \quad\text{on}\quad\bdw, \\
 \end{cases}\end{equation} 
where $\la>0$ is a constant, $0\in\Omega$. We like to point out as in \cite{YY} that, there is some differences between the case $t=2$ and $t\in(0,2)$. If $t=2$, solutions of \eqref{eq:cz-prob} are singular at $0$ and that was the main reason to impose the condition $\mu\in\displaystyle\left(0, \frac{(N-2)^2}{4}-4\right)$ in \cite{CZ}. If $t\in(0,2)$, no such condition is needed. Also, there  are some differences between the cases when $a$ is a constant function and not a constant function.   

\vspace{2mm}

Our main theorem is the following:
\begin{Theorem}\label{t:main}
Let $N\geq 7$, $0\in\bdw$, all the principal curvatures of $\bdw$ at $0$ be negative and \eqref{assump} hold. Then  Equation \eqref{eq:prob} has infinitely many sign changing solutions.
\end{Theorem}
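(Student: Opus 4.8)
The plan is to construct infinitely many sign changing critical points of $I$ via a minimax scheme adapted to problems with lack of compactness, in the spirit of Schechter--Zou \cite{SZ} and its recent adaptations \cite{G}, \cite{GS}. First I would set up the variational framework: one builds a sequence of nested symmetric subspaces (or an appropriate symmetric family of sets with increasing genus/index) and defines minimax values $c_k$ of $I$ over this family. Because $I$ is even only in a suitable truncated sense — the nonlinearity $\mu|u|^{2^\star-2}u+|u|^{2^\star(t)-2}u/|x|^t$ is odd — the functional is $\Z_2$-invariant, so one can run an equivariant minimax and produce a sequence of Palais--Smale sequences $\{u_n^{(k)}\}$ at levels $c_k$ with $c_k\to\infty$. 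The delicate point is that $I$ does not satisfy the Palais--Smale condition globally because of the two critical exponents, so one cannot conclude directly that $c_k$ are critical values; instead one needs a quantitative control of the energy levels where compactness is lost.

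The second, and main, step is a global compactness / energy estimate. One proves a profile decomposition (à la Struwe) for Palais--Smale sequences of $I$: any such sequence, up to a subsequence, decomposes as a genuine solution plus a finite sum of rescaled and translated ``bubbles'' — standard Aubin--Talenti bubbles $U_{\varepsilon,z}$ carrying energy $\tfrac1N S^{N/2}$ and, because $0\in\bdw$, half-bubbles at the origin carrying energy associated with the best Hardy--Sobolev constant on the half-space, plus mixed bubbles. The crucial input here is that all principal curvatures of $\bdw$ at $0$ are negative: this ensures (as in \cite{CL}, \cite{GK}, \cite{GR}, \cite{LL}) that the Hardy--Sobolev energy contributed by a boundary bubble at $0$ is \emph{strictly larger} than the flat half-space value, which, combined with the Devillanova--Solimini \cite{DS} compactness result for Brezis--Nirenberg type problems in dimension $N\ge 7$ (used exactly as in \cite{YY} to rule out accumulation of interior Sobolev bubbles), forces any bounded-energy Palais--Smale sequence below a computable threshold to be relatively compact. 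The term $a(x)u$ is subcritical and contributes only lower-order perturbations, handled by the Brezis--Nirenberg argument \cite{BN}.

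The third step is to combine the two: one shows $I$ satisfies a local Palais--Smale condition $(PS)_c$ for all $c$ in a range that, thanks to the growth $c_k\to\infty$, nevertheless contains infinitely many of the minimax levels — more precisely, using the sign changing structure (each relevant $u_n^{(k)}$ has both a nontrivial positive and a nontrivial negative part, so its limiting decomposition must involve enough bubbles/energy), one shows the minimax levels fall into the compactness range for infinitely many $k$, or are themselves achieved. At those levels one extracts genuine critical points $u_k$; the $\Z_2$-equivariant structure of the minimax (working in a class of sets symmetric under $u\mapsto -u$ and using a Krasnoselskii-genus type index that is not too small) guarantees each $u_k$ is sign changing and that the $u_k$ are distinct, since $I(u_k)=c_k\to\infty$.

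The hard part will be the compactness analysis at the boundary point $0$: one must carefully compute the energy contribution of a Hardy--Sobolev bubble concentrating at a boundary point with negative principal curvatures and show the strict energy gap, and then splice this with the Devillanova--Solimini bound on interior Sobolev bubbles into a single statement that no Palais--Smale sequence below the threshold (in particular none of the minimax sequences, for $k$ large) can lose compactness. Verifying that the sign changing constraint is compatible with the minimax scheme — i.e. that the deformation lemma can be run without leaving the class of sign changing functions and that the resulting critical points genuinely change sign — is the other technical point, and is where the structure of \cite{SZ} is essential.
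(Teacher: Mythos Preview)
Your proposal attempts to work directly with the critical functional $I$ via a profile decomposition and local $(PS)_c$ analysis. This is \emph{not} how the paper proceeds, and there is a genuine gap in your scheme that the paper's approach is specifically designed to avoid.

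The difficulty is this: a Struwe-type profile decomposition for Palais--Smale sequences of $I$ only tells you \emph{how} compactness is lost (by bubbling), not that it is recovered at high energy. The strict energy gap coming from the negative curvature of $\bdw$ at $0$, together with the Brezis--Nirenberg correction from $a(x)u$, gives a local $(PS)_c$ condition only below the first bubbling threshold (roughly one Sobolev or one boundary Hardy--Sobolev bubble). But your minimax values satisfy $c_k\to\infty$, so all but finitely many of them lie \emph{above} every such threshold; your sentence ``the minimax levels fall into the compactness range for infinitely many $k$'' is precisely what cannot be justified, and the sign-changing constraint only makes things worse, since a sign-changing PS sequence carries at least two bubbles' worth of energy. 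You also invoke Devillanova--Solimini \cite{DS} as if it rules out interior Sobolev bubbles in a critical PS sequence; it does not --- that result (and its extension by Yan--Yang \cite{YY}) is a compactness statement for \emph{exact solutions of subcritical approximations}, valid at \emph{all} bounded energy levels, which is a very different object.

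The paper's route is to exploit exactly this distinction. One replaces $I$ by the subcritical functionals $I_n$ associated to~\eqref{eq:sub-prob}; each $I_n$ satisfies Palais--Smale globally, so the Schechter--Zou abstract theorem applies directly to $I_n$ and yields, for every $n$, sign-changing critical points $u_{n,l}$ with augmented Morse index $m^{*}(u_{n,l})\geq l$ and energies $c_{n,l}$ bounded uniformly in $n$ for each fixed $l$ (Theorem~\ref{t:sub}). One then passes to the limit $n\to\infty$ using the Yan--Yang compactness Theorem~\ref{t:YY} (this is where $N\geq 7$ and the negative principal curvatures enter), obtaining sign-changing solutions $u_l$ of~\eqref{eq:prob}. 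Finally, the Morse index lower bound --- an ingredient entirely absent from your outline --- is what forces $c_l\to\infty$ and hence gives infinitely many distinct $u_l$: if the $c_l$ were bounded, a diagonal sequence $u_{n_l,l}$ would be $H_0^1$-bounded, converge strongly by Theorem~\ref{t:YY}, and thus have bounded augmented Morse index, contradicting $m^{*}(u_{n_l,l})\geq l$.
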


We will prove this theorem by applying an abstract theorem by Schechter and Zou \cite[theorem 2]{SZ}. However we can not apply that theorem directly as $I$, that is, the variational problem corresponding to \eqref{eq:prob} does not satisfy Palais-Smale condition. To overcome this difficulty we consider the perturbed subcritical problem 
\begin{equation}\label{eq:sub-prob}
\begin{cases}
 - \Delta u = \mu|u|^{2^{\star}-2-\epsilon_n}u+\frac{|u|^{2^{\star}(t)-2-\epsilon_n}u}{|x|^t}+a(x)u \quad\text{in}\quad \Omega,\\
u = 0  \quad\text{on}\quad\bdw, \\
\end{cases}\end{equation}
where $0<\epsilon_n\downarrow 0.$ We will prove that for each $\epsilon_n$, \eqref{eq:sub-prob} has a sequence of sign changing solution $\{u_{n,l}\}_{l=1}^{\infty}$ and we will show that Morse index of $\{u_{n,l}\}_{l=1}^{\infty}$ has a lower bound. Then we will prove that for fixed $l$, $\sup_{n\in\N}||u_{n,l}||_{H^1_0(\Omega)}<\infty$.  

\vspace{2mm}

We organize the paper as follows. In Section 2, we prove the existence and the estimate of Morse index of sign changing solution of \eqref{eq:sub-prob}. Using this, in Section 3 we 
prove Theorem \ref{t:main}. In Section 4, we prove a nonexistence result in star shaped domain under some condition on the function $a$.

{\bf Notation}: Through out this paper we denote the norm in $H^1_0(\Omega)$ by $||u||=\displaystyle\left(\Iom|\na u|^2 dx\right)^\frac{1}{2}$ and $|u|_{q,t,\Omega}:=\displaystyle\left(\Iom\frac{|u|^q}{|x|^t}dx\right)^\frac{1}{q}$. We say $u\in L^q_t(\Omega)$ If $|u|_{q,t,\Omega}<\infty$.  

\section{Existence of sign changing critical points}
Consider the weighted eigenvalue prob:
\begin{equation}\label{eq:e-v}
-\Delta u=\la a(x)u \quad\text{in}\quad\Omega; \quad u=0 \quad\text{on}\quad\bdw.
\end{equation}
Since $a\in C^1(\bar\Omega)$ and strictly positive, the above operator has infinitely many eigenvalues $\{\la_i\}_{i=1}^{\infty}$ such that $0<\la_1(\Omega)<\la_2(\Omega)\leq\la_3(\Omega)\leq\cdots\leq\la_l(\Omega)\leq\cdots$. Therefore, we can write 
\begin{equation}\label{eq:la}
\la_1=\inf_{u\in H^1_0(\Omega), u\not=0}\frac{\Iom|\na u|^2 dx}{\Iom a(x)u^2 dx}.
\end{equation} 
Let $\phi_i$ be the orthonormal eigen vectors corresponding to $\la_i$ where we know $\phi_1>0$. We denote $E_k=\text{span}\{\phi_1,\cdots,\phi_k\}$. Then $E_k\subset E_{k+1}$ and $H^1_0(\Omega)=\overline{\cup_{k=1}^{\infty} E_k}$ (see \cite{GT}).

\begin{Lemma}Suppose all the assumptions in Theorem \ref{t:main} hold. In addition, if $\la_1\leq 1$, then Equation \eqref{eq:prob} has infinitely many sign changing solutions.
\end{Lemma}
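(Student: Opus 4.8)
The plan is to run, in the ``easy'' regime $\la_1\le 1$, exactly the scheme announced for Theorem~\ref{t:main}: subcritical approximation, the abstract sign changing critical point theorem \cite[Theorem 2]{SZ}, Morse index estimates, and a limit $\epsilon_n\downarrow 0$. The point of the extra hypothesis $\la_1\le 1$ is that it puts us directly into the \emph{linking} geometry for which \cite[Theorem 2]{SZ} is designed: when $\la_1>1$ the quadratic part of $I$ is positive definite and $0$ is a strict local minimum, so the theorem does not apply directly (that is the situation handled by the rest of the paper), whereas when $\la_1\le 1$ the quadratic part has a nontrivial negative subspace. Concretely, normalizing the $\phi_i$ in $H^1_0(\Omega)$ one has $\|u\|^2-\int_\Omega a(x)u^2=\sum_i(1-\la_i^{-1})c_i^2$ for $u=\sum_i c_i\phi_i$; so if $m_0\ge 1$ is the number of eigenvalues $\la_i\le 1$ and $E^-=\mathrm{span}\{\phi_1,\dots,\phi_{m_0}\}$, $E^+=\overline{\mathrm{span}}\{\phi_{m_0+1},\phi_{m_0+2},\dots\}$, then $H^1_0(\Omega)=E^-\oplus E^+$, the quadratic part of $I$ is nonpositive on $E^-$ and positive definite on $E^+$ (the degeneracy when some $\la_i=1$ is absorbed into $E^-$ and made strict there by the superquadratic terms), and together with the filtration $E_k$ this is precisely the abstract setting of \cite[Theorem 2]{SZ}.

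First I would fix $n$ and work with \eqref{eq:sub-prob}. Its energy functional $I_n$ is even, $C^1$, and satisfies the Palais--Smale condition at every level, since both nonlinear terms are subcritical (the embeddings $H^1_0(\Omega)\hookrightarrow L^{2^{\star}-\epsilon_n}(\Omega)$ and $H^1_0(\Omega)\hookrightarrow L^{2^{\star}(t)-\epsilon_n}_{t}(\Omega)$ are compact) and satisfy the Ambrosetti--Rabinowitz condition with exponent $\theta=2^{\star}(t)-\epsilon_n>2$. Applying \cite[Theorem 2]{SZ} with the splitting above yields, for each $n$, a sequence $\{u_{n,l}\}_{l\ge 1}$ of sign changing critical points of $I_n$ with critical values $c_{n,l}$ and with (augmented) Morse index bounded below by a quantity $d_l$ that is independent of $n$ and satisfies $d_l\to\infty$. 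Moreover, for each fixed $l$ the value $c_{n,l}$ lies between two positive constants independent of $n$: the upper bound is a supremum of $I_n$ over a fixed finite-dimensional symmetric set, on which $I_n\to I$ uniformly as $\epsilon_n\downarrow 0$, and the positive lower bound comes from the linking inequality on a fixed sphere in $E^+$.

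Next, for $l$ fixed I would show that, along a subsequence, $u_{n,l}\to u_l$ strongly in $H^1_0(\Omega)$ for some nontrivial sign changing $u_l$. If this failed — either $\|u_{n,l}\|\to\infty$, or the convergence were only weak, $u_{n,l}\rightharpoonup u_l$ — then $\{u_{n,l}\}_n$ would have to concentrate, i.e.\ develop a bubble. An interior bubble is excluded by the compactness theorem of Devillanova--Solimini \cite{DS}, valid for $N\ge 7$, because the Morse indices of the $u_{n,l}$ are controlled (in the range relevant to the fixed $l$, as the construction provides). A Hardy--Sobolev bubble at the boundary point $0$ is excluded by the hypothesis that all principal curvatures of $\partial\Omega$ at $0$ are negative: this is the mechanism by which, as in \cite{YY}, the negative curvature keeps the relevant energy levels below the threshold at which such boundary concentration occurs. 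Hence $\sup_n\|u_{n,l}\|<\infty$ and $u_{n,l}\to u_l$ strongly, so $u_l$ is a weak solution of \eqref{eq:prob}; since $\liminf_n c_{n,l}>0=I(0)$ we get $u_l\not\equiv 0$; the sign changing property passes to the strong limit; and since the Morse index of $u_l$ is bounded below in terms of $l$ while $d_l\to\infty$, the family $\{u_l\}$ contains infinitely many distinct functions, which are the desired solutions.

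The step I expect to be hardest is precisely this exclusion of concentration: controlling the Morse indices of $\{u_{n,l}\}_n$ uniformly in $n$ so that \cite{DS} can be invoked, and carrying out the fine blow-up analysis at $0\in\partial\Omega$ that converts the negative-curvature hypothesis into the exclusion of a boundary Hardy--Sobolev bubble; the $n$-uniform upper and positive-lower bounds on the minimax levels $c_{n,l}$ in the linking construction are the other technical point.
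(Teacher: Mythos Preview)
Your plan is far more elaborate than the paper's two--line argument, and it also has a real gap.

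\textbf{What the paper does.} The lemma is immediate from two observations. First, testing \eqref{eq:prob} against the first eigenfunction $\phi_1>0$ gives
\[
(\la_1-1)\Iom a(x)\,u\,\phi_1\,dx=\mu\Iom |u|^{2^\star-2}u\,\phi_1\,dx+\Iom\frac{|u|^{2^\star(t)-2}u\,\phi_1}{|x|^t}\,dx,
\]
so if $\la_1\le 1$ and $u$ does not change sign, the two sides have opposite strict signs unless $u\equiv 0$. Hence every nontrivial solution of \eqref{eq:prob} is sign changing. Second, \cite[Theorem~1.2]{YY} already supplies infinitely many nontrivial solutions under the hypotheses of Theorem~\ref{t:main}. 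Done.

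\textbf{The gap in your approach.} You invoke \cite[Theorem~2]{SZ}, but you have the role of $\la_1$ reversed. That abstract theorem, in the form used in this paper, requires the cone--invariance condition verified in Proposition~\ref{p:1}: $K_n(\pm\mathcal{D}(\delta_0))\subset\pm\mathcal{D}(\delta)$. The proof of that proposition writes $K_n=L+G_n$ and uses $\|L(u)\|\le\la_1^{-1}\|u\|$ together with a choice of $\nu\in(\la_1^{-1},1)$; this is exactly where $\la_1>1$ is needed, and the argument breaks down when $\la_1\le 1$ since the linear part $L$ no longer contracts distances to the positive cone. So you cannot simply ``apply \cite[Theorem~2]{SZ} with the splitting above''; the hypothesis you would need is precisely the one that fails. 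Similarly, the $H^1_0$ bound in Theorem~\ref{t:sub} and the lower bound $\|u_{n,l}^\pm\|\ge C>0$ used to pass the sign--changing property to the limit (Section~3) both rely on the factor $(1-\la_1^{-1})>0$, which is unavailable here. Your claim that ``the sign changing property passes to the strong limit'' is therefore unsupported --- the only way to rescue it is the $\phi_1$--test above, at which point the whole subcritical/Morse--index machinery is superfluous and one should just quote \cite{YY}.
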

\begin{proof}
By multiplying the Equation \eqref{eq:prob} by $\phi_1$ and integrating by parts, it is easy to check that if $\la_1\leq 1$, then any nontrivial solution of \eqref{eq:prob} has to change sign. Also by \cite[Theorem 1.2]{YY}, it follows that Equation \eqref{eq:prob} has infinitely many solutions. Therefore the lemma follows. 
\hfill$\square$ 
\end{proof}

{\bf Therefore now onwards we assume $\la_1>1$}.  We fix $\epsilon_0>0$ small enough and choose a sequence $\epsilon_n\in(0,\epsilon_0)$
 such that $\epsilon_n\downarrow 0$ in \eqref{eq:sub-prob}.

We define the energy functional corresponding to \eqref{eq:sub-prob} as 
\begin{equation}
 I_n(u) = \frac12 \int\limits_{\Omega}|\nabla u|^2\; dx-  \frac{1}{2} \int\limits_{\Omega}a(x)|u|^2 \; dx - \frac{\mu}{2^{*}-\epsilon_n}\Iom|u|^{2^{*}-\epsilon_n} \; dx-\frac{1}{2^{*}(t)-\epsilon_n} \int\limits_{\Omega}\frac{|u|^{2^{*}(t)-\epsilon_n}}{|x|^t}\;dx.
\end{equation}
Then $I_n$ is an even $C^2$ functional on $H^1_0(\Omega)$. Also, $I_n$ satisfies Palais-Smale condition for each $n$. Therefore by Ambrosetti and Rabinowitz \cite{AR}, \eqref{eq:sub-prob} has infinitely many critical points $\{u_{n,l}\}_{l=1}^{\infty}$. More precisely, it follows from \cite{R} that there are positive numbers $c_{n,l}$, $l=1,2, \cdots,$ with $c_{n,l}\uparrow\infty$ as $\l\uparrow\infty$ and $I_n(u_{n,l})=c_{n,l}$. 
We define the augmented Morse index of $u_{n,l}$ by $m^{*}(u_{n,l})$ as follows:
$$m^{*}(u_{n,l}):=\text{max}\{\text{dim}\  H: H\subset H^1_0(\Omega)\ \ \text{is a subspace such that}\ \ I_n^{''}(v,v)\leq 0 \quad\forall v\in H^1_0(\Omega)\}.$$

For each $\epsilon_n$, We define
$$||u||_{*,n}=\mu|u|_{L^{2^{*}-\epsilon_n}(\Omega)}+\displaystyle\left(\Iom\frac{|u|^{2^{*}(t)-\epsilon_n}}{|x|^t}\right)^\frac{1}{2^{*}(t)-\epsilon_n}; \quad\forall\quad u\in H^1_0(\Omega).$$

Here we state two lemmas in the same spirit as in \cite{G}. therefore we omit the proof.
\begin{Lemma}\label{G1}If $\Omega$ is a bounded domain in $\Rn$, $0\leq t<2$, $N\geq 3$ and $1\leq q\leq p<\infty$, then $L^p_t(\Omega)\subset L^q_t(\Omega)$ and the inclusion is continuous.
\end{Lemma}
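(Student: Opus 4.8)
The plan is to reduce the statement to the classical Lyapunov–Hölder interpolation argument adapted to the weighted measure $d\nu = |x|^{-t}\,dx$. First I would observe that since $\Omega$ is bounded, $\nu(\Omega) = \int_\Omega |x|^{-t}\,dx < \infty$: indeed $t < 2 < N$, so the singularity at the origin is integrable in $\Rn$ (by passing to polar coordinates, $\int_{B_\rho(0)} |x|^{-t}\,dx \sim \int_0^\rho r^{N-1-t}\,dr < \infty$ because $N-1-t > -1$), and away from the origin $|x|^{-t}$ is bounded. This finiteness is the only structural fact about the weight that is needed.

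Next, given $u \in L^p_t(\Omega)$ with $1 \le q \le p < \infty$, I would apply Hölder's inequality on the measure space $(\Omega, d\nu)$ to the functions $|u|^q \in L^{p/q}(d\nu)$ and $1 \in L^{(p/q)'}(d\nu)$, where $(p/q)' = \frac{p}{p-q}$ is the conjugate exponent (with the usual convention that this is $\infty$ when $p = q$, in which case the estimate is trivial). This gives
\begin{equation*}
\int_\Omega \frac{|u|^q}{|x|^t}\,dx \;=\; \int_\Omega |u|^q\,d\nu \;\le\; \left(\int_\Omega \frac{|u|^p}{|x|^t}\,dx\right)^{\frac{q}{p}} \nu(\Omega)^{1-\frac{q}{p}}.
\end{equation*}
Raising both sides to the power $1/q$ yields $|u|_{q,t,\Omega} \le \nu(\Omega)^{\frac{1}{q}-\frac{1}{p}}\,|u|_{p,t,\Omega}$, which simultaneously shows $u \in L^q_t(\Omega)$ and that the inclusion map $L^p_t(\Omega) \hookrightarrow L^q_t(\Omega)$ is bounded (hence continuous, being linear) with operator norm at most $\nu(\Omega)^{1/q - 1/p}$.

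There is no real obstacle here; the only point requiring a line of care is the integrability of the weight near the origin when $0 \in \bdw$ (or $0 \in \bar\Omega$ more generally), and that is handled by the elementary polar-coordinate computation above using $t < 2 \le N$. The case $p = q$ should be disposed of first as a trivial identity so that the Hölder step is applied only when the conjugate exponent is finite.
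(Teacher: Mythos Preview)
Your argument is correct and is the standard one: integrability of $|x|^{-t}$ on bounded $\Omega$ (since $t<2\le N$) makes $(\Omega,|x|^{-t}dx)$ a finite measure space, and then H\"older with exponents $p/q$ and $p/(p-q)$ gives $|u|_{q,t,\Omega}\le \nu(\Omega)^{1/q-1/p}|u|_{p,t,\Omega}$. The paper does not actually supply a proof of this lemma---it is stated without proof with a reference to \cite{G}---so there is nothing to compare against beyond noting that your approach is exactly the expected one.
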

\begin{Lemma}\label{G2}
Let $1\leq q<2^{*}(t)$, $0\leq t<2$ and $N\geq 3$. Then the embedding $H^1_0(\Omega)\subset L^q_t(\Omega)$ is compact.
\end{Lemma}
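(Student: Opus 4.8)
The plan is to prove the equivalent sequential statement: every sequence $\{u_n\}$ bounded in $H^1_0(\Omega)$ has a subsequence converging in $L^q_t(\Omega)$. So I would pass to a subsequence (not relabelled) with $u_n \deb u$ in $H^1_0(\Omega)$, note that $u\in H^1_0(\Omega)$, and aim to show $|u_n-u|_{q,t,\Omega}\to 0$. The essential idea is to split $\Omega$ according to the distance from the singularity of the weight: fix $\delta>0$ and treat $\Omega\cap B_\delta(0)$ and $\Omega\setminus B_\delta(0)$ separately.

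On $\Omega\setminus B_\delta(0)$ the weight satisfies $|x|^{-t}\le\delta^{-t}$, so
\[
\int_{\Omega\setminus B_\delta(0)}\frac{|u_n-u|^q}{|x|^t}\,dx\le \delta^{-t}\,\|u_n-u\|_{L^q(\Omega)}^q .
\]
Since $q<2^{\star}(t)\le 2^{\star}=\frac{2N}{N-2}$, the Rellich--Kondrachov theorem gives that $H^1_0(\Omega)\hookrightarrow L^q(\Omega)$ is compact, hence $\|u_n-u\|_{L^q(\Omega)}\to 0$ and this term tends to $0$ as $n\to\infty$ for the fixed $\delta$.

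The part over $\Omega\cap B_\delta(0)$ is where the exponent restriction is used, and is the only delicate point. Here I would apply H\"older's inequality with conjugate exponents $p$ and $p'$ to separate $|u_n-u|^q$ from $|x|^{-t}$:
\[
\int_{\Omega\cap B_\delta(0)}\frac{|u_n-u|^q}{|x|^t}\,dx\le \Big(\int_{B_\delta(0)}|u_n-u|^{qp}\,dx\Big)^{1/p}\Big(\int_{B_\delta(0)}|x|^{-tp'}\,dx\Big)^{1/p'}.
\]
Choosing $p=2^{\star}/q$ (which is $>1$ since $q<2^{\star}$), the first factor is bounded by $\|u_n-u\|_{L^{2^{\star}}(\Omega)}^{q}$, and this is uniformly bounded in $n$ by the Sobolev inequality, i.e. \eqref{Sobolev} with $t=0$. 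With this choice $p'=2^{\star}/(2^{\star}-q)$, and a direct computation shows that $tp'<N$ holds \emph{precisely} when $q<2^{\star}(t)$; hence $|x|^{-tp'}$ is integrable near $0$ and $\int_{B_\delta(0)}|x|^{-tp'}\,dx=O(\delta^{N-tp'})\to 0$ as $\delta\downarrow 0$. Consequently, given $\var>0$ I would first choose $\delta$ so small that the $\Omega\cap B_\delta(0)$ contribution is $<\var/2$ for every $n$, then use the previous step to bound the $\Omega\setminus B_\delta(0)$ contribution by $\var/2$ for $n$ large; letting $\var\downarrow 0$ yields $|u_n-u|_{q,t,\Omega}\to 0$. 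The only thing requiring care is the bookkeeping of the H\"older exponents: once one observes that the two requirements $qp\le 2^{\star}$ (for the Sobolev bound) and $tp'<N$ (for the local integrability of the weight) are simultaneously satisfiable exactly under the hypothesis $q<2^{\star}(t)$, the rest is routine, which is why the proof is only sketched and attributed to the analogous argument in \cite{G}.
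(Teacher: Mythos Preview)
Your argument is correct and is exactly the standard proof one expects here; the paper itself omits the proof entirely and simply attributes the lemma to \cite{G}, so there is nothing to compare against beyond noting that your splitting into $B_\delta(0)$ and its complement, together with the H\"older/Sobolev estimate near the origin (where the condition $tp'<N\Longleftrightarrow q<2^{\star}(t)$ is the key computation), is the natural way to carry out the argument referenced there.
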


Therefore by \eqref{Sobolev}, Lemma \ref{G1} and Lemma \ref{G2}, we have $||u||_{*,n}\leq C||u||_{H^1_0(\Omega)}$ where $C$ is independent of $n$ and $||u_l-u||_{*,n}\to 0$ whenever $u_l\deb u$ in $H^1_0(\Omega)$. Hence $\bf{(A_0)}$ of \cite{SZ} is satisfied.

We define, $\mathcal{P}:=\{u\in H^1_0(\Omega): u\geq 0\}$ and $\mathcal{K}_n:=\{u\in H^1_0(\Omega): I_n^{'}(u)=0\}$. For each $\delta>0$, we define $\mathcal{D}(\delta):=\{u\in H^1_0(\Omega): \text{dist}(u,\mathcal{P})<\delta\}$ .

The gradient $I_n^{'}$ is of the form $I_n^{'}(u)=u-K_n(u)$, where $K_n: H^1_0(\Omega)\to H^1_0(\Omega)$ is a continuous operator. In the next proposition, we will see how the operator, $K_n$, behaves on $\mathcal{D}(\delta)$. 

\begin{Proposition}\label{p:1}
Let $\la_1>1$ and \eqref{assump} hold. Then for any $\delta_0>0$ small enough, \\
$K_n(\pm\mathcal{D}(\delta_0))\subset\pm\mathcal{D}(\delta)\subset\pm\mathcal{D}(\delta_0)$ for some $\delta\in(0,\delta_0)$. Moreover, $\pm\mathcal{D}(\delta_0)\cap\mathcal{K}_n\subset\mathcal{P}.$
\end{Proposition}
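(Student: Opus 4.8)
The plan is to establish the invariance of neighborhoods of the positive and negative cones under the gradient-type flow map $K_n$, following the standard strategy for producing sign-changing solutions via the method of invariant sets. The key observation is that $K_n(u) = u - I_n'(u)$ is (up to this affine shift) essentially the solution operator for the subcritical problem: writing $w = K_n(u)$, testing $I_n'(u)$ against functions and using that $w$ solves $-\Delta w + w = -\Delta u + u - I_n'(u)$ weakly, we see that $w$ is the $H_0^1$-solution of $-\Delta w + w = \mu|u|^{2^\star-2-\epsilon_n}u + |u|^{2^\star(t)-2-\epsilon_n}u/|x|^t + a(x)u + u$. The right-hand side, call it $f(u)$, has the crucial monotonicity-type property that if $u \geq 0$ then $f(u) \geq 0$ (since $a > 0$), and by the maximum principle $w \geq 0$; more quantitatively, one controls $\mathrm{dist}(w,\mathcal{P})$ by $\|w - w^+\|$ where $w^+ = \max(w,0)$, and $w^-$ satisfies an elliptic inequality driven by the negative part of $f(u)$.

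First I would set up the distance estimate: for $u \in \mathcal{D}(\delta_0)$, decompose $u = u^+ - u^-$ with $\|u^-\| = \mathrm{dist}(u,\mathcal{P}) < \delta_0$ (using that $\mathcal{P}$ is convex and closed, so the projection onto $\mathcal{P}$ is exactly $u^+$). Then estimate $\mathrm{dist}(K_n(u),\mathcal{P}) \leq \|K_n(u)^-\|$. Testing the equation for $w = K_n(u)$ against $-w^-$ gives $\|w^-\|^2 + |w^-|_2^2 = -\int f(u) w^- \, dx$. Here I split $f(u) = f(u^+) - [\text{terms involving } u^-]$, or more directly bound $-\int f(u)w^-$ using that $f(u) \geq -C(\|u^-\|_{*,n}^{2^\star-1-\epsilon_n} + \dots + \|u^-\|)$-type quantities supported where $u < 0$. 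The upshot should be an inequality of the form $\|w^-\| \leq \eta(\|u^-\|)\|u^-\|$ where $\eta(\delta) \to 0$ as $\delta \to 0$, using the continuity of $\|\cdot\|_{*,n}$ controlled by $\|\cdot\|$ (the property $(A_0)$ already verified) together with Lemma~\ref{G1}, Lemma~\ref{G2}, and the Hardy--Sobolev inequality \eqref{Sobolev}; the subcritical exponents $2^\star - \epsilon_n$, $2^\star(t) - \epsilon_n$ make the relevant embeddings compact and give nonlinear (superlinear) dependence on $\|u^-\|$. Choosing $\delta_0$ small makes $\eta(\delta_0) < 1$, so we can pick $\delta = \eta(\delta_0)\delta_0 \in (0,\delta_0)$, yielding $K_n(\mathcal{D}(\delta_0)) \subset \mathcal{D}(\delta) \subset \mathcal{D}(\delta_0)$. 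The same argument with $u \mapsto -u$ (using that $I_n$ is even, hence $K_n$ is odd) handles $-\mathcal{D}(\delta_0)$.

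For the last assertion, suppose $u \in \mathcal{D}(\delta_0) \cap \mathcal{K}_n$, so $I_n'(u) = 0$, i.e. $u = K_n(u)$. Then the invariance just proved gives $u = K_n(u) \in \mathcal{D}(\delta)$, and iterating, $u \in \mathcal{D}(\eta(\delta_0)^k \delta_0)$ for every $k$, so $\mathrm{dist}(u,\mathcal{P}) = 0$, hence $u \in \mathcal{P}$ since $\mathcal{P}$ is closed. (Alternatively and more cleanly: the estimate $\|u^-\| = \|K_n(u)^-\| \leq \eta(\|u^-\|)\|u^-\|$ with $\eta < 1$ forces $\|u^-\| = 0$ directly.) The symmetric statement for $-\mathcal{D}(\delta_0)$ follows identically.

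The main obstacle I anticipate is making the distance estimate genuinely uniform in $n$: the constant in $\eta$ must not blow up as $\epsilon_n \to 0$. This requires care because the exponents $2^\star - \epsilon_n$ and $2^\star(t) - \epsilon_n$ vary with $n$, but since $\epsilon_n \in (0,\epsilon_0)$ with $\epsilon_0$ fixed and small, all these exponents lie in a fixed compact interval strictly below the critical ones (and strictly above $1$ for $N \geq 7$), so the embedding constants in Lemma~\ref{G1}, Lemma~\ref{G2} and the interpolation bounds can be taken uniform — this is precisely where the subcritical perturbation and the already-noted uniform bound $\|u\|_{*,n} \leq C\|u\|$ with $C$ independent of $n$ do their work. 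A secondary technical point is handling the cross terms arising from $|u|^{p-2}u = |u^+|^{p-2}u^+ - |u^-|^{p-2}u^-$ and the weight $|x|^{-t}$ near $0 \in \bdw$; here Lemma~\ref{G2} (compact embedding into $L^q_t$ for $q < 2^\star(t)$) is exactly what is needed, and no boundary curvature hypothesis enters at this stage.
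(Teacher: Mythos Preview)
Your outline has the right overall architecture (positivity of $K_n$ on $\mathcal{P}$, distance estimate via the negative part, iteration for critical points), but there is a genuine gap in the core estimate, and it is precisely where the hypothesis $\lambda_1>1$ should enter --- yet you never use it.

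You assert that testing against $-w^-$ yields $\|w^-\|\le \eta(\|u^-\|)\,\|u^-\|$ with $\eta(\delta)\to 0$ as $\delta\to 0$. This is false because of the \emph{linear} term $a(x)u$ in the right-hand side. The superlinear terms $\mu|u|^{2^\star-2-\epsilon_n}u$ and $|u|^{2^\star(t)-2-\epsilon_n}u/|x|^t$ do produce contributions of order $\mathrm{dist}(u,\mathcal{P})^{2^\star-1-\epsilon_n}+\mathrm{dist}(u,\mathcal{P})^{2^\star(t)-1-\epsilon_n}$, which are $o(\mathrm{dist}(u,\mathcal{P}))$ as $\delta\to 0$; but the term $a(x)u$ contributes a piece that is \emph{linear} in $\mathrm{dist}(u,\mathcal{P})$ with a fixed coefficient. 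The best one can hope for is that this coefficient is strictly less than $1$, and that is exactly what $\lambda_1>1$ guarantees.

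The paper makes this work by splitting $K_n=L+G_n$, where $-\Delta L(u)=a(x)u$ and $-\Delta G_n(u)$ carries the two power nonlinearities. The operator $L$ is linear, maps $\mathcal{P}$ into $\mathcal{P}$ by the maximum principle, and satisfies $\|L(u)\|\le \lambda_1^{-1}\|u\|$ from \eqref{eq:la}; linearity then gives
\[
\mathrm{dist}(L(u),\mathcal{P})\le \|L(u)-L(v)\|\le \lambda_1^{-1}\|u-v\|\quad\text{for every }v\in\mathcal{P},
\]
hence $\mathrm{dist}(L(u),\mathcal{P})\le \lambda_1^{-1}\mathrm{dist}(u,\mathcal{P})$. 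For $G_n$ one argues as you do, obtaining $\mathrm{dist}(G_n(u),\mathcal{P})\le C\big(\mathrm{dist}(u,\mathcal{P})^{2^\star-1-\epsilon_n}+\mathrm{dist}(u,\mathcal{P})^{2^\star(t)-1-\epsilon_n}\big)$. Summing, one picks $\nu\in(\lambda_1^{-1},1)$ and then $\delta_0$ small so that $\mathrm{dist}(K_n(u),\mathcal{P})\le \nu\,\mathrm{dist}(u,\mathcal{P})$ on $\mathcal{D}(\delta_0)$. Your single ``test against $-w^-$'' computation cannot recover the $L$--estimate in this form: it would give a bound in terms of $\|u^-\|$, which in $H_0^1(\Omega)$ is only an \emph{upper} bound for $\mathrm{dist}(u,\mathcal{P})$ (the metric projection onto $\mathcal{P}$ in the $H_0^1$--norm is not $u^+$ in general), so one cannot close the loop.

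Two smaller points. First, with the paper's inner product $\langle u,v\rangle=\int_\Omega\nabla u\cdot\nabla v$, the operator $K_n(u)=u-I_n'(u)$ solves $-\Delta K_n(u)=a(x)u+\mu|u|^{2^\star-2-\epsilon_n}u+|u|^{2^\star(t)-2-\epsilon_n}u/|x|^t$; your equation $-\Delta w+w=f(u)+u$ corresponds to a different (namely the full $H^1$) inner product, and in that setting the competing constants change. Second, your remark about uniformity in $n$ is well taken and is indeed needed; in the paper's argument the constant $\lambda_1^{-1}$ is manifestly independent of $n$, and the superlinear exponents stay bounded away from $1$, so the choice of $\delta_0$ can be made uniform.
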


\begin{proof}
First we note that $K_n(u)$ can be decomposed as 
$K_n(u)=L(u)+G_n(u)$, where $L(u), G_n(u)\in H^1_0(\Omega)$ are the unique solutions of the following equations:
$$-\Delta(L(u))=a(x)u; \quad -\Delta (G_n(u))=\mu|u|^{2^{*}-2-\epsilon_n}u+\frac{|u|^{2^{*}(t)-2-\epsilon_n}u}{|x|^t}.$$
In other words, $L(u)$ and $G_n(u)$ are uniquely determined by 
\begin{equation}\label{eq:L_n}
\displaystyle\left<L(u), v\right>_{H^1_0(\Omega)}=\Iom a(x)uv dx,
\end{equation}
\begin{equation}\label{eq:G_n}
\left<G_n(u), v\right>_{H^1_0(\Omega)}=\mu\Iom|u|^{2^{*}-2-\epsilon_n}uv dx+\Iom\frac{|u|^{2^{*}(t)-2-\epsilon_n}uv}{|x|^t}dx.
\end{equation}

We claim that, if $u\in\mathcal{P}$ then $L(u), G_n(u)\in\mathcal{P}$. To see this, let $u\in\mathcal{P}$. Then
$$-\displaystyle\Iom|\na L(u)^{-}|^2dx=\left<L(u), L(u)^{-}\right>_{H^1_0(\Omega)}=\Iom a(x)uL(u)^{-}\geq 0,$$
which immediately implies $L(u)\in\mathcal{P}$. Similary, we have $G_n(u)\in\mathcal{P}$.

Using \eqref{eq:L_n} we see,
\begin{equation}
||L(u)||^2=\displaystyle\left<L(u), L(u)\right>_{H^1_0(\Omega)}=\Iom a(x)uL(u)\leq\left(\Iom a(x)u^2 dx\right)^\frac{1}{2}\left(\Iom a(x)L(u)^2 dx\right)^\frac{1}{2}
\end{equation}
Therefore using \eqref{eq:la} in the above expression, we obtain
$$||L(u)||^2_{H^1_0(\Omega)}\leq\frac{1}{\la_1}||u||_{H^1_0(\Omega)}||L(u)||_{H^1_0(\Omega)},$$ which yields $||L(u)||_{H^1_0(\Omega)}\leq\frac{1}{\la_1}||u||_{H^1_0(\Omega)}.$

For any $u\in H^1_0(\Omega)$ we consider $v\in \mathcal{P}$ such that $\text{dist}(u,\mathcal{P})=||u-v||$. Then
\begin{equation}\label{eq:Lu}
\text{dist}(L(u),\mathcal{P})\leq||L(u)-L(v)||\leq\frac{1}{\la_1}||u-v||\leq\frac{1}{\la_1}\text{dist}(u,\mathcal{P}).
\end{equation}
Next, 
\begin{eqnarray}\label{1}
\text{dist}(G_n(u),\mathcal{P})||G_n(u)^{-}|| &\leq&||G_n(u)-G_n(u)^{+}|| ||G_n(u)^{-}||=||G_n(u)^{-}||^2\no\\
&\leq&-\displaystyle\left<G_n(u), G_n(u)^{-}\right>_{H^1_0(\Omega)}\no\\
&=&-\mu\Iom|u|^{2^{*}-2-\epsilon_n}u G_n(u)^{-}-\Iom\frac{|u|^{2^{*}(t)-2-\epsilon_n}u G_n(u)^{-}}{|x|^t}\no\\
&\leq&\mu\Iom|u|^{2^{*}-2-\epsilon_n}u^{-} G_n(u)^{-}+\Iom\frac{|u|^{2^{*}(t)-2-\epsilon_n}u^{-} G_n(u)^{-}}{|x|^t}\no\\
&=&\mu\Iom|u^{-}|^{2^{*}-1-\epsilon_n} G_n(u)^{-}+\Iom\frac{|u^{-}|^{2^{*}(t)-1-\epsilon_n} G_n(u)^{-}}{|x|^t}\no\\
&\leq&\mu\left(\Iom|u^{-}|^{2^{*}-\epsilon_n}dx\right)^\frac{2^{*}-1-\epsilon_n}{2^{*}-\epsilon_n}\left(\Iom|G_n(u)^{-}|^{2^{*}-\epsilon_n}dx\right)^\frac{1}{2^{*}-\epsilon_n}
\no\\
&+&\left(\Iom\frac{|u^{-}|^{2^{*}(t)-\epsilon_n}}{|x|^t}dx\right)^\frac{2^{*}(t)-1-\epsilon_n}{2^{*}(t)-\epsilon_n}\left(\Iom\frac{|G_n(u)^{-}|^{2^{*}(t)-\epsilon_n}}{|x|^t}dx\right)^\frac{1}{2^{*}(t)-\epsilon_n}.
\end{eqnarray}
By using Lemma \ref{G1} and the Sobolev inequality, the last term in the RHS of the above expression \eqref{1} can be shown less than $$C\displaystyle\left(|u^{-}|_{L^{2^{*}-\epsilon_n}}^{2^{*}-1-\epsilon_n}+|u^{-}|_{L^{2^{*}(t)-\epsilon_n}_t}^{2^{*}(t)-1-\epsilon_n}\right)||G_n(u)^{-}||_{H^1_0(\Omega)}.$$ Therefore we obtain from \eqref{1},
\begin{equation*}
\text{dist}(G_n(u), \mathcal{P})\leq C\displaystyle\left(|u^{-}|_{L^{2^{*}-\epsilon_n}}^{2^{*}-1-\epsilon_n}+|u^{-}|_{L^{2^{*}(t)-\epsilon_n}_t}^{2^{*}(t)-1-\epsilon_n}\right)
\end{equation*}
Using \eqref{Sobolev}, it is easy to check that, from the above equation we can obtain  
$$\text{dist}(G_n(u), \mathcal{P})\leq C\displaystyle\left(\text{dist}(u,\mathcal{P})^{2^{*}-1-\epsilon_n}+\text{dist}(u,\mathcal{P})^{2^{*}(t)-1-\epsilon_n}\right) \quad \forall\quad u\in H^1_0(\Omega),$$(see \cite{CZ}, \cite{G}). As $\la_1>1$, we choose $\nu\in (\frac{1}{\la_1}, 1)$. Then there exists  $\delta_0>0$ such that if $\delta\leq\delta_0$, we have
\begin{equation}\label{eq:Gu}
\text{dist}(G_n(u),\mathcal{P})\leq\displaystyle\left(\nu-\frac{1}{\la_1}\right)\text{dist}(u, \mathcal{P}) \quad\forall\quad u\in\mathcal{D}(\delta).
\end{equation}
Combining \eqref{eq:Lu} and \eqref{eq:Gu}, we obtain $$\text{dist}(K_n(u),\mathcal{P})\leq\text{dist}(L(u),\mathcal{P})+\text{dist}(G_n(u),\mathcal{P})\leq \nu \ \text{dist}(u, \mathcal{P}) \quad\forall\quad u\in\mathcal{D}(\delta).$$
Hence we get, $K_n(\mathcal{D}(\delta_0)\subset\mathcal{D}(\delta))\subset\mathcal{D}(\delta_0))$ for some $\delta\in(0,\delta_0)$. Also if, dist$(u,\mathcal{P})<\delta_0$ and $I_n^{'}(u)=0$ that is, $u=K_n(u)$, then we have, dist$(u,\mathcal{P})=\text{dist}(K_n(u),\mathcal{P})\leq\nu\text{dist}(u,\mathcal{P})$, which immediately implies $u\in\mathcal{P}$. Similarly we can prove $K_n(-\mathcal{D}(\delta_0))\subset -\mathcal{D}(\delta)\subset-\mathcal{D}(\delta_0)$ for some $\delta\in(0,\delta_0)$ and  $-\mathcal{D}(\delta_0)\cap\mathcal{K}_n\subset\mathcal{P}.$ This completes the proof. 
\hfill$\square$
\end{proof}

\begin{Lemma}\label{l:1}
Let $\la_1>1$ and \eqref{assump} hold. Then for each $k$, $\lim_{||u||\to\infty, u\in E_k}I_n(u)=-\infty$
\end{Lemma}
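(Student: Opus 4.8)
The plan is to exploit the fact that on the finite-dimensional space $E_k$ all norms are equivalent, so that $\|u\|_{H^1_0(\Omega)}$, $|u|_{L^{2^\star-\epsilon_n}(\Omega)}$ and $|u|_{L^{2^\star(t)-\epsilon_n}_t(\Omega)}$ are all comparable on $E_k$, with constants depending on $k$ and $n$ but independent of the particular $u\in E_k$. Concretely, first I would fix $k$ and $n$ and write, for $u\in E_k$,
\begin{equation*}
I_n(u)=\frac12\|u\|^2-\frac12\Iom a(x)|u|^2\,dx-\frac{\mu}{2^\star-\epsilon_n}\Iom|u|^{2^\star-\epsilon_n}\,dx-\frac{1}{2^\star(t)-\epsilon_n}\Iom\frac{|u|^{2^\star(t)-\epsilon_n}}{|x|^t}\,dx,
\end{equation*}
and then bound the two critical-type terms from below using norm equivalence on $E_k$: there are constants $c_1=c_1(k,n)>0$ and $c_2=c_2(k,n)>0$ such that $\Iom|u|^{2^\star-\epsilon_n}\,dx\ge c_1\|u\|^{2^\star-\epsilon_n}$ and $\Iom|u|^{2^\star(t)-\epsilon_n}/|x|^t\,dx\ge c_2\|u\|^{2^\star(t)-\epsilon_n}$ for all $u\in E_k$.

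Next I would throw away the negative term $-\frac12\Iom a(x)|u|^2\,dx$ crudely: since $a\in C^1(\bar\Omega)$ is bounded, $\Iom a(x)|u|^2\,dx\le \|a\|_{L^\infty}\Iom|u|^2\,dx\le C(k)\|u\|^2$, so this term is only quadratic in $\|u\|$. Combining the estimates gives
\begin{equation*}
I_n(u)\le \frac12\bigl(1+C(k)\bigr)\|u\|^2-\frac{\mu c_1}{2^\star-\epsilon_n}\|u\|^{2^\star-\epsilon_n}-\frac{c_2}{2^\star(t)-\epsilon_n}\|u\|^{2^\star(t)-\epsilon_n},
\end{equation*}
and since $\epsilon_n<\epsilon_0$ with $\epsilon_0$ small, both exponents $2^\star-\epsilon_n$ and $2^\star(t)-\epsilon_n$ are strictly greater than $2$ (here one uses $N\ge 7$, or simply $N\ge 3$ and $\epsilon_0$ small, so that $2^\star(t)-\epsilon_0>2$). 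Hence the right-hand side is a function of $r=\|u\|$ of the form $Ar^2-Br^p-Dr^q$ with $p,q>2$ and $B,D>0$ (note $D>0$ always since the Hardy--Sobolev term is genuinely present; if $\mu=0$ one just drops the middle term and uses only the last), which tends to $-\infty$ as $r\to\infty$. Since this bound is uniform over $u\in E_k$ with $\|u\|=r$, it follows that $I_n(u)\to-\infty$ as $\|u\|\to\infty$ within $E_k$.

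The only point requiring a little care — and really the only ``obstacle,'' though a mild one — is the lower bound $\Iom|u|^{2^\star(t)-\epsilon_n}/|x|^t\,dx\ge c_2\|u\|^{2^\star(t)-\epsilon_n}$ on $E_k$. This is not completely automatic from norm equivalence because the weight $|x|^{-t}$ is singular at $0\in\bdw$; however Lemma \ref{G2} guarantees that $u\mapsto |u|_{L^{2^\star(t)-\epsilon_n}_t(\Omega)}$ is a finite, continuous (in fact norm) functional on $H^1_0(\Omega)$, hence a norm on the finite-dimensional subspace $E_k$, and any two norms on a finite-dimensional space are equivalent. The same remark applies to the $L^{2^\star-\epsilon_n}$ term. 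Thus all the comparison constants exist, and the argument closes. I would remark in passing that the constants degenerate as $k\to\infty$, which is exactly why this lemma is stated for each fixed $k$ and does not, by itself, give coercivity-type control uniformly in $k$.
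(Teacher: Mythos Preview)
Your argument is correct and is precisely the standard one the paper has in mind: the authors do not actually give a proof of this lemma, remarking only that it ``can be proved by using the similar technique as in \cite{G},'' i.e.\ exactly the finite-dimensional norm-equivalence argument you outline. One small redundancy: since $a>0$ by \eqref{assump}, the term $-\tfrac12\Iom a(x)|u|^2\,dx$ is already $\le 0$ and can simply be dropped, yielding the cleaner bound $I_n(u)\le\tfrac12\|u\|^2-\tfrac{c_2}{2^{*}(t)-\epsilon_n}\|u\|^{2^{*}(t)-\epsilon_n}$; your detour through $\|a\|_{L^\infty}$ is harmless but unnecessary.
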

\begin{Lemma}\label{l:2}
Let $\la_1>1$ and \eqref{assump} hold. Then for any $\al_1, \al_2>0$, there exists an $\al_3$ depending on $\al_1$ and $\al_2$ such that $||u||\leq\al_3$ for all $u\in I_n^{\al_1}\cap\{u\in H^1_0(\Omega): ||u||_{*,n}\leq\al_2\}$, where $I_n^{\al_1}=\{u\in H^1_0(\Omega): I_n(u)\leq\al_1\}$.
\end{Lemma}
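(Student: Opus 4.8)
The plan is to argue by contradiction. Fix $\al_1, \al_2 > 0$ and suppose no such $\al_3$ exists; then there is a sequence $u_j \in I_n^{\al_1}$ with $\|u_j\|_{*,n} \leq \al_2$ but $\|u_j\| \to \infty$. I would write $I_n(u_j) \leq \al_1$ out in full,
\begin{equation*}
\frac12\|u_j\|^2 - \frac12\Iom a(x)|u_j|^2\,dx - \frac{\mu}{2^{*}-\epsilon_n}\Iom|u_j|^{2^{*}-\epsilon_n}\,dx - \frac{1}{2^{*}(t)-\epsilon_n}\Iom\frac{|u_j|^{2^{*}(t)-\epsilon_n}}{|x|^t}\,dx \leq \al_1 ,
\end{equation*}
and the point is that the last two (negative) terms are controlled by $\|u_j\|_{*,n}$, hence bounded in terms of $\al_2$ only: indeed $\Iom|u_j|^{2^{*}-\epsilon_n}\,dx = |u_j|_{L^{2^{*}-\epsilon_n}}^{2^{*}-\epsilon_n} \leq (\|u_j\|_{*,n}/\mu)^{2^{*}-\epsilon_n}$ when $\mu>0$ (and the term simply vanishes when $\mu = 0$), and likewise $\Iom|u_j|^{2^{*}(t)-\epsilon_n}|x|^{-t}\,dx \leq \|u_j\|_{*,n}^{2^{*}(t)-\epsilon_n} \leq \al_2^{2^{*}(t)-\epsilon_n}$. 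So from $I_n(u_j)\leq\al_1$ we get
\begin{equation*}
\frac12\|u_j\|^2 \leq \al_1 + \frac12\Iom a(x)|u_j|^2\,dx + C(\al_2,\mu,\epsilon_n),
\end{equation*}
with $C(\al_2,\mu,\epsilon_n)$ independent of $j$.

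The remaining task is to absorb the term $\tfrac12\Iom a(x)|u_j|^2\,dx$. Here I would use the variational characterization \eqref{eq:la} of $\la_1$, which gives $\Iom a(x)|u_j|^2\,dx \leq \la_1^{-1}\|u_j\|^2$, so that
\begin{equation*}
\frac12\Big(1 - \frac{1}{\la_1}\Big)\|u_j\|^2 \leq \al_1 + C(\al_2,\mu,\epsilon_n).
\end{equation*}
Since we are under the standing assumption $\la_1 > 1$, the coefficient $\tfrac12(1-\la_1^{-1})$ is strictly positive, and this bounds $\|u_j\|$ uniformly in $j$, contradicting $\|u_j\|\to\infty$. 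This proves the lemma, with $\al_3 = \big(2(\al_1 + C(\al_2,\mu,\epsilon_n))/(1-\la_1^{-1})\big)^{1/2}$.

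The only genuinely delicate point is bookkeeping: one must be careful that the constant coming from the Hardy–Sobolev/Lebesgue bounds on the critical-type terms depends only on $\al_2$ (and on $n$, $\mu$, $N$, $t$, $\Omega$) and not on $j$ — which is immediate from the definition of $\|\cdot\|_{*,n}$ — and that the case $\mu=0$ is handled separately so one does not divide by $\mu$. Everything else is the elementary estimate $1 - \la_1^{-1} > 0$. Note the constant $\al_3$ here does depend on $n$ through $\epsilon_n$ and through $C$; this is harmless for the present lemma, the $n$-uniform bounds being obtained later in Section 3.
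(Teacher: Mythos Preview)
Your argument is correct and is exactly the standard computation the paper has in mind (the paper does not spell out a proof but refers to \cite{G}, where the same estimate is carried out). The contradiction framing is harmless but unnecessary: everything you wrote for $u_j$ holds verbatim for an arbitrary $u$ in the set, so you obtain the explicit bound $\|u\|^2 \leq 2(1-\la_1^{-1})^{-1}\big(\al_1 + C(\al_2,\mu,\epsilon_n)\big)$ directly, without passing to a sequence.
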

The above two lemmas are quite standard and can be proved by using the similar technique as in \cite{G}.

\begin{Theorem}\label{t:sub}
Let $\la_1>1$ and \eqref{assump} hold. Then for each $n$, Equation \eqref{eq:sub-prob} has infinitely many sign changing solutions $\{u_{n,l}\}_{l=1}^{\infty}$ such that for each $l$, the sequence $\{u_{n,l}\}$ is bounded in $H^1_0(\Omega)$ and the augmented Morse index of $u_{n,l}$ is greater than or equal to $l$. 
\end{Theorem}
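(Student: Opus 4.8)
The plan is to derive Theorem~\ref{t:sub} from the abstract linking theorem of Schechter and Zou \cite[Theorem~2]{SZ}, applied to the even functional $I_n\in C^2(H^1_0(\Omega))$, the filtration $H^1_0(\Omega)=\overline{\bigcup_k E_k}$, and the closed convex cone $\mathcal P$. Most of the hypotheses of that theorem have already been arranged above: condition $(A_0)$ was verified right after Lemma~\ref{G2}; the gradient decomposition $I_n'=\mathrm{id}-K_n$ together with the invariance $K_n(\pm\mathcal D(\delta_0))\subset\pm\mathcal D(\delta)\subset\pm\mathcal D(\delta_0)$ and the inclusion $\pm\mathcal D(\delta_0)\cap\mathcal K_n\subset\pm\mathcal P$ is precisely Proposition~\ref{p:1}; $I_n$ satisfies the Palais--Smale condition at every level because \eqref{eq:sub-prob} is subcritical; Lemma~\ref{l:1} provides $I_n\to-\infty$ along each finite-dimensional $E_k$, the large-radius part of the linking geometry; and Lemma~\ref{l:2} supplies the coercivity relating $\|\cdot\|_{*,n}$ to $\|\cdot\|$ that \cite{SZ} requires. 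Finally, since we are in the regime $\la_1>1$, the quadratic part $Q_n(u):=\|u\|^2-\Iom a(x)u^2\,dx\ge(1-\tfrac1{\la_1})\|u\|^2$ is positive definite, whence $I_n(u)\ge\tfrac12(1-\tfrac1{\la_1})\|u\|^2-C\|u\|^{2^{\star}-\epsilon_n}-C\|u\|^{2^{\star}(t)-\epsilon_n}>0$ for $\|u\|$ small and positive; this is the small-radius positivity completing the linking geometry of \cite[Theorem~2]{SZ}.

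Granting these verifications, \cite[Theorem~2]{SZ} yields, for each $l$, a critical point $u_{n,l}$ of $I_n$ with $u_{n,l}\notin\mathcal D(\delta_0)\cup(-\mathcal D(\delta_0))$ and $I_n(u_{n,l})=c_{n,l}$, where the associated min--max values satisfy $c_{n,l}\uparrow\infty$ as $l\to\infty$, and the augmented Morse index satisfies $m^{*}(u_{n,l})\ge l$. A critical point lying in $\mathcal D(\delta_0)$ (resp.\ $-\mathcal D(\delta_0)$) belongs to $\mathcal P$ (resp.\ $-\mathcal P$) by Proposition~\ref{p:1}, so the location $u_{n,l}\notin\pm\mathcal D(\delta_0)$ forces $u_{n,l}$ to be neither nonnegative nor nonpositive; being a nontrivial weak solution of \eqref{eq:sub-prob}, $u_{n,l}$ therefore changes sign. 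This gives, for each fixed $n$, the sequence $\{u_{n,l}\}_{l=1}^{\infty}$ of sign changing solutions with the stated Morse index bound.

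It remains to prove, for each fixed $l$, that $\sup_{n\in\N}\|u_{n,l}\|<\infty$. The first step is a uniform bound $\sup_n c_{n,l}\le C_l<\infty$: in the min--max scheme of \cite[Theorem~2]{SZ} the value $c_{n,l}$ is obtained by minimaxing $I_n$ over a class of sets containing a fixed comparison set lying in a finite-dimensional subspace $E_m$ with $m=m(l)$ independent of $n$, and on $E_m$ all the $L^p$-type norms entering $I_n$ are equivalent to $\|\cdot\|$ with constants that may be taken independent of $n$ (the exponents stay in the bounded interval $[2,2^{\star}]$); together with Lemma~\ref{l:1} this bounds the supremum of $I_n$ over the comparison set by a constant $C_l$ not depending on $n$, so $c_{n,l}\le C_l$. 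The second step converts this into an $H^1_0$-bound: choosing $\epsilon_0$ small enough that $q_n:=2^{\star}(t)-\epsilon_n>2$ for all $n$ (possible since $2^{\star}(t)=\tfrac{2(N-t)}{N-2}>2$), and using $\langle I_n'(u_{n,l}),u_{n,l}\rangle=0$ together with $2^{\star}(t)-\epsilon_n<2^{\star}-\epsilon_n$ and $\mu\ge0$, one gets
\[
c_{n,l}=I_n(u_{n,l})-\frac{1}{q_n}\langle I_n'(u_{n,l}),u_{n,l}\rangle\ge\Bigl(\frac12-\frac1{q_n}\Bigr)Q_n(u_{n,l})\ge\Bigl(\frac12-\frac1{2^{\star}(t)-\epsilon_0}\Bigr)\Bigl(1-\frac1{\la_1}\Bigr)\|u_{n,l}\|^2,
\]
so that $\|u_{n,l}\|^2\le C_l\big/\bigl[(\tfrac12-\tfrac1{2^{\star}(t)-\epsilon_0})(1-\tfrac1{\la_1})\bigr]$ uniformly in $n$, which is the desired bound. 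The step I expect to be the main obstacle is exactly this uniform bound $\sup_n c_{n,l}<\infty$: it requires reading off from \cite{SZ} that the min--max level over the cone-truncated classes is still controlled by comparison sets lying in a fixed finite-dimensional subspace, and checking that the truncation by $\pm\mathcal D(\delta_0)$ does not spoil that estimate; the verification of the abstract hypotheses and of the linking geometry is, by contrast, routine once Proposition~\ref{p:1} and Lemmas~\ref{l:1}--\ref{l:2} are available.
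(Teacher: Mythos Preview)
Your argument is correct and follows essentially the same route as the paper's proof. The step you flag as the main obstacle is handled in the paper by quoting directly from \cite[Theorem~2]{SZ} the bound $c_{n,l}\le\sup_{E_{l+1}}I_n$ (so the comparison set is simply $E_{l+1}$ and no cone-truncation subtlety arises); the paper then computes explicitly, using $\|u\|^2\le C\la_{l+1}|u|^2_{2^{*}(t)-\epsilon_0,t,\Omega}$ on $E_{l+1}$ together with a H\"older comparison between the $L^{2^{*}(t)-\epsilon_n}_t$ and $L^{2^{*}(t)-\epsilon_0}_t$ norms, that $\sup_{E_{l+1}}I_n\le T_1\la_{l+1}^{(2^{*}(t)-\epsilon_0)/(2(2^{*}(t)-\epsilon_0-2))}$ with $T_1$ independent of both $n$ and $l$. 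Your conversion of the level bound into an $H^1_0$-bound via $I_n(u_{n,l})-\frac{1}{2^{*}(t)-\epsilon_n}\langle I_n'(u_{n,l}),u_{n,l}\rangle$ is identical to the paper's.
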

\begin{proof}
By applying Proposition \ref{p:1}, Lemma \ref{l:1} and Lemma \ref{l:2}, we see that $I_n$ satisfies all the assumptions $(A_1)-(A_3)$in \cite[Theorem 2]{SZ}. Hence by \cite[Theorem 2]{SZ}, $I_n$ has a sign changing critical point $u_{n,l}$ at the level $c_{n,l}$, where $c_{n,l}\leq\sup_{E_{l+1}}I_n$ and $m^{*}(u_{n,l})\geq l$. 

{\bf Claim}: There exists  positive constant $T_1$, independent of $n$ and $l$ such that $$c_{n,l}\leq T_1\la_{l+1}^\frac{2^{*}(t)-\epsilon_0}{2(2^{*}(t)-\epsilon_0-2)}.$$ 

To see this, we note that since $2^{*}(t)-\epsilon_0>2$, then
\begin{equation}\label{2}
||u||^2\leq \la_{l+1}|u|^2_{L^2(\Omega)}\leq C\la_{l+1}|u|^2_{L^{2^{*}(t)-\epsilon_0}_t(\Omega)} \quad\forall\quad  u\in E_{l+1},
\end{equation}
where $C>0$ is a constant independent of $n,l$. Since for $0< t<2$, $2^{*}(t)-\epsilon_0<2^{*}(t)-\epsilon_n$, 
by H\"{o}lder inequality, it is easy to check that, there exists constants $D_1, D'_1>0$, independent of $n, l$ such that $|u|_{L^{2^{*}(t)-\epsilon_0}_t(\Omega)}<D_1|u|_{L^{2^{*}(t)-\epsilon_n}_t(\Omega)}+D'_1$. Therefore,
\begin{eqnarray*}
I_n(u)&\leq&\frac{1}{2}\Iom|\na u|^2 dx-\frac{1}{2^{*}(t)-\epsilon_n}\Iom\frac{|u|^{2^{*}(t)-\epsilon_n}}{|x|^t}dx\\
&\leq&\frac{1}{2}||u||^2-D_2\Iom\frac{|u|^{2^{*}(t)-\epsilon_0}}{|x|^t}dx+D_3,
\end{eqnarray*}
where $D_2, D_3>0$ are constants independent of $n,l$. Using \eqref{2} in the above expression, we have for all $u\in E_{l+1}$, 
\begin{equation*}
I_n(u)\leq\frac{1}{2}||u||^2-D_4\la_{l+1}^{-\frac{2^{*}(t)-\epsilon_0}{2}}||u||^{L^{2^{*}(t)-\epsilon_0}}+D_3\leq D_5\la_{l+1}^\frac{2^{*}(t)-\epsilon_0}{2(2^{*}(t)-\epsilon_0-2)}+D_3\leq T_1\la_{l+1}^\frac{2^{*}(t)-\epsilon_0}{2(2^{*}(t)-\epsilon_0-2)},
\end{equation*}
where $D_i (i=1,\cdots,5)$ and $T_1$ are positive constants independent of $n,l$. Since energy of any critical point is non-negative, we conclude, $I_n(u_{n,l})\in[0, T_1\la_{l+1}^\frac{2^{*}(t)-\epsilon_0}{2(2^{*}(t)-\epsilon_0-2)}]$. 
Also we see that,
\begin{eqnarray*}
I_n(u_{n,l})&=&I_n(u_{n,l})-\frac{1}{2^{*}(t)-\epsilon_n}I'_n(u_{n,l})(u_{n,l})\\
&=&\displaystyle\left(\frac{1}{2}-\frac{1}{2^{*}(t)-\epsilon_n}\right)\Iom(|\na u_{n,l}|^2-a(x)u_{n,l}^2)dx\\
&+&\mu\left(\frac{1}{2^{*}(t)-\epsilon_n}-\frac{1}{2^{*}-\epsilon_n}\right)\Iom |u_{n,l}|^{2^{*}-\epsilon_n}dx\\
&\geq&\displaystyle\left(\frac{1}{2}-\frac{1}{2^{*}(t)-\epsilon_0}\right)\Iom(|\na u_{n,l}|^2-a(x)u_{n,l}^2)dx\\
&\geq&\displaystyle\left(\frac{1}{2}-\frac{1}{2^{*}(t)-\epsilon_0}\right)\left(1-\frac{1}{\la_1}\right)||u_{n,l}||^2
\end{eqnarray*}
As $\la_1>1$, coefficient in the RHS is strictly positive. Hence $\{u_{n,l}\}_{n=1}^{\infty}$ is bounded in $H^1_0(\Omega)$ for each $l$, which completes the proof.
\hfill$\square$
\end{proof}

\section{Proof of Theorem \ref{t:main}}
We start this section by quoting a Theorem from Yan and Yang \cite[Theorem 1.1]{YY}

\begin{Theorem}\label{t:YY}
Let $a\in C^1(\bar\Omega)$, $a(0)>0$ and $0\in\bdw$, all the principal curvatures of $\bdw$ at $0$ are negative. If $N\geq 7$, $\mu\geq 0$, then for any $u_n$ which is a solution of \eqref{eq:sub-prob} with $\epsilon=\epsilon_n\to 0$, satisfying $||u_n||\leq C$, for some constant independent of $n$, then $u_n$ converges strongly in $H^1_0(\Omega)$.
\hfill$\square$
\end{Theorem}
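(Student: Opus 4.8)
The plan is to argue by contradiction via a blow-up analysis combined with a local Pohozaev identity, following the scheme of Devillanova--Solimini \cite{DS} and adapting it to the presence of the Hardy--Sobolev weight $|x|^{-t}$. Passing to a subsequence, since $\|u_n\|\le C$ we may assume $u_n\deb u$ in $H^1_0(\Omega)$, and $u$ is then a weak solution of \eqref{eq:prob}. Suppose, for contradiction, that $u_n\not\to u$ strongly, so $w_n:=u_n-u\deb 0$ with $\|w_n\|\not\to 0$. First I would apply the concentration--compactness principle to the measures $|\na u_n|^2\,dx$, $\mu|u_n|^{2^{\star}-\epsilon_n}\,dx$ and $\frac{|u_n|^{2^{\star}(t)-\epsilon_n}}{|x|^t}\,dx$. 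Because the first critical nonlinearity is the pure Sobolev one (scaling invariant everywhere) while the second carries a weight singular only at the origin, the only possible atoms of the limit measures are: finitely many points $x_j\in\bar\Omega$ coming from the term $\mu|u_n|^{2^{\star}-2-\epsilon_n}u_n$ --- hence present only if $\mu>0$ --- and the single point $0\in\bdw$ coming from the Hardy--Sobolev term (away from $0$ the embedding $H^1_0(\Omega)\hookrightarrow L^{2^{\star}(t)-\epsilon_n}$ is compact, whereas $H^1_0(\Omega)\hookrightarrow L^{2^{\star}(t)}_t(\Omega)$ is not). Since $\|u_n\|\le C$ caps the total concentrated energy there are only finitely many atoms, and it suffices to exclude each of them.

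To exclude an atom $x_0$ I would perform the standard rescaling: pick $x_n$ near a local extremum of $|u_n|$ in a small neighbourhood of $x_0$ and the matching scale $\lambda_n\to 0$, and set $\tilde u_n(y):=\lambda_n^{(N-2)/2}u_n(x_n+\lambda_n y)$. Elliptic estimates give convergence in $C^2_{loc}$ to a nontrivial limit $U$: according to whether $x_0$ is interior, lies on $\bdw\setminus\{0\}$, or equals $0$, and whether $|x_n|/\lambda_n$ stays bounded, $U$ solves on $\Rn$ or on a half-space one of $-\Delta U=\mu|U|^{2^{\star}-2}U$ or $-\Delta U=\frac{|U|^{2^{\star}(t)-2}U}{|y|^t}$, and is, up to the known invariances, the explicit Aubin--Talenti instanton or the Hardy--Sobolev extremal. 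At a boundary atom the $C^3$ regularity of $\bdw$ allows one to flatten the boundary; it is precisely here that the hypothesis that all principal curvatures of $\bdw$ at $0$ are negative enters, through the boundary terms $\partial\Omega\cap B_{\rho}$ in the Pohozaev identity below, which it forces to carry a sign incompatible with blow-up (cf. the mean-curvature obstruction for mountain-pass solutions mentioned in the introduction).

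The heart of the argument is a local Pohozaev identity: testing \eqref{eq:sub-prob} against the multiplier $(x-x_0)\cdot\na u_n+\tfrac{N-2}{2}u_n$ and integrating over $B_{\rho_n}(x_0)\cap\Omega$ for a suitable $\rho_n\to 0$ with $\rho_n/\lambda_n\to\infty$, one checks that the two critical nonlinearities contribute only $O(\epsilon_n)$ (the homogeneity of $|u|^{2^{\star}-\epsilon_n}$ together with the identity $x\cdot\na(|x|^{-t})=-t|x|^{-t}$ makes the genuinely ``critical'' parts cancel up to the $\epsilon_n$-defect), the boundary integrals on $\partial B_{\rho_n}(x_0)$ are controlled by the bubble tail and shown to be $o(\lambda_n^2)$, and the linear term $a(x)u_n$ produces the bulk term $\int_{B_{\rho_n}(x_0)\cap\Omega}\bigl(a(x)+\tfrac12(x-x_0)\cdot\na a(x)\bigr)u_n^2\,dx$. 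Since $N\ge 7$, in particular $N\ge 5$, one has $U\in L^2$, and as $\rho_n\to 0$ this bulk term is bounded below, after rescaling, by $\bigl(\tfrac12 a(x_0)+o(1)\bigr)\lambda_n^2\,c_0$ with $c_0=\int|U|^2>0$; as $a>0$ (and $a(0)>0$ at the boundary atom) this is $\ge c\,\lambda_n^2$ for a fixed $c>0$. Collecting everything yields $c\,\lambda_n^2\le o(\lambda_n^2)$, a contradiction.

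\textbf{The main obstacle} is exactly this last step: one must show that, precisely when $N\ge 7$, the self-interaction of the bubble ($\propto\lambda_n^2$) strictly dominates every competing contribution --- in particular the cross term between the bubble and the regular part $u$, which scales like $\lambda_n^{(N-2)/2}$ and is negligible against $\lambda_n^2$ only when $(N-2)/2>2$, i.e. $N\ge 7$ --- and that neither the Hardy--Sobolev weight $|x|^{-t}$ near $0$ nor the boundary geometry lowers this threshold; this is the technical core inherited from \cite{DS}. Once all atoms are ruled out, $u_n\to u$ in $L^{2^{\star}}(\Omega)$ and in $L^{2^{\star}(t)}_t(\Omega)$, the subcritical defects $\epsilon_n\to 0$ causing no trouble thanks to Lemma \ref{G1} and interpolation; testing the equations satisfied by $u_n$ and by $u$ against $u_n-u$ and subtracting then gives $\|u_n-u\|\to 0$, completing the proof.
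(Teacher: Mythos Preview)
The paper does not prove Theorem~\ref{t:YY}: it is explicitly \emph{quoted} from Yan--Yang \cite[Theorem~1.1]{YY}, and the $\square$ appended to the statement signals that no argument is supplied here. So there is no ``paper's own proof'' to compare against; what you have written is in effect a sketch of the proof in \cite{YY}, which adapts the Devillanova--Solimini compactness machinery \cite{DS} to accommodate the Hardy--Sobolev term and the boundary singularity at $0$. The broad architecture you describe --- bubble extraction by rescaling, a local Pohozaev identity around each concentration point, and the dimensional threshold $N\ge 7$ arising from the competition between the $a(x)u^2$ bulk term and the interaction error --- is indeed the strategy of \cite{YY}.

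Two places where your outline is thinner than the actual argument and would need real work to become a proof. First, the boundary bubble at $0$ is the genuinely new ingredient over \cite{DS}: the negative-principal-curvature hypothesis enters not merely as a sign condition on a remainder but through a precise expansion of the boundary integral $\int_{\partial\Omega\cap B_{\rho_n}}(\partial_\nu u_n)^2\,(x\cdot\nu)\,dS$ after straightening $\partial\Omega$; this term is of order $\lambda_n$ and, under the curvature assumption, carries the sign that forces the contradiction at $0$, whereas you fold it into an unspecified $o(\lambda_n^2)$. Second, \cite{YY} (following \cite{DS}) does not just rule out isolated atoms of a limiting defect measure: one must control the full profile decomposition, including possible towers of bubbles concentrating at different rates at the same point, and it is the \emph{bubble--bubble} interaction estimates, not only the bubble--regular-part cross term you single out, that pin down $N\ge 7$. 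Your comparison of $\lambda_n^{(N-2)/2}$ against $\lambda_n^2$ captures one of the relevant balances but not the complete hierarchy.
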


{\bf Proof of Theorem \ref{t:main}}: Combining Theorem \ref{t:sub} and Theorem \ref{t:YY}, we obtain  $u_{n,l}\to u_l$ in $H^1_0(\Omega)$ as $n\to\infty$. Then $\{u_l\}_{l=1}^{\infty}$ is a sequence of solution to the Equation \eqref{eq:prob} with energy $c_l\in [0, T_1\la_{l+1}^\frac{2^{*}(t)-\epsilon_0}{2(2^{*}(t)-\epsilon_0-2)}]$. Next, we claim that $u_l$ is sign changing for each $l$. To see this, we note that as $I'_n(u_{n,l})=0$, we get
$$\Iom\left(|\na u_{n,l}^{\pm}|^2-a(x)|u_{n,l}^{\pm}|^2\right)dx=\mu\Iom|u_{n,l}^{\pm}|^{2^{*}-\epsilon_n}dx+\Iom\frac{|u_{n,l}^{\pm}|^{2^{*}(t)-\epsilon_n}}{|x|^t}dx$$
Therefore, using \eqref{eq:la} we have
$$\left(1-\frac{1}{\la_1}\right)||u_{n,l}^{\pm}||^2\leq\mu\Iom|u_{n,l}^{\pm}|^{2^{*}-\epsilon_n}dx+\Iom\frac{|u_{n,l}^{\pm}|^{2^{*}(t)-\epsilon_n}}{|x|^t}dx .$$
Since $\big(1-\frac{1}{\la_1}\big)<1$, by using  Lemma \ref{G1} and Sobolev inequality \eqref{Sobolev} in the above expression, we obtain $||u^{\pm}_{n,l}||\geq C>0$, where $C$ is independent of $n$. This in turn implies, $
||u^{\pm}_{l}||\geq C'>0$. Hence the claim follows.

To complete the proof, the last thing remains to show that infinitely many $u_l's$ are different. This is equivalent to prove $\lim_{l\to\infty}I(u_l)=\lim_{l\to\infty}c_l=\infty$. We prove this by method of contradiction. Suppose, $\lim_{l\to\infty}c_l\leq c<\infty$. For each $l$, we can find $n_l>l$ such that $|c_{n_l,l}-c_l|<\frac{1}{l}$. Therefore, $\lim_{l\to\infty}c_{n_l,l}=\lim_{l\to\infty}c_l<c<\infty$. Since $I'_n(u_{n_l,l})=0$, once again it proves that $\{u_{n_l,l}\}$ is bounded in $H^1_0(\Omega)$. Therefore by Theorem \ref{t:YY}, $\{u_{n_l,l}\}$ converges in $H^1_0(\Omega)$ and augmented Morse index of $\{u_{n_l,l}\}_{l=1}^{\infty}$ remains bounded, which is a contradiction to the fact that $m^{*}(u_{n_l,l})\geq l$. This completes the proof.
\hfill$\square$

\section{Nonexistence result}

\begin{Theorem}\label{t:nonex}
Suppose $N\geq 3$, $a\in C^1(\bar\Omega)$ and $\big(a(x)+\frac{1}{2} x\cdot\na a\big)\leq 0$ for every $x\in\Omega$. Then Equation \eqref{eq:prob} does not have any nontrivial solution in a domain which is star shaped domain with respect to the origin.
\end{Theorem}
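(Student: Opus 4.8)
The natural approach is a Pohozaev-type identity. Let $u\in H^1_0(\Omega)$ be a nontrivial solution of \eqref{eq:prob}. By standard elliptic regularity $u$ is smooth in $\bar\Omega\setminus\{0\}$ (recall $\bdw\in C^3$), and by the Hardy--Sobolev inequality \eqref{Sobolev} all the integrals occurring below are finite. The plan is to multiply the equation by $x\cdot\na u$ and integrate over $\Omega_\rho:=\Omega\setminus\overline{B_\rho(0)}$, integrate by parts, and let $\rho\downarrow 0$; the contributions over the sphere $\pa B_\rho(0)$ are controlled by $\int_{B_\rho(0)}|\na u|^2$ and $\int_{B_\rho(0)}|u|^{2^{\star}(t)}|x|^{-t}$ and therefore vanish in the limit.

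Writing the nonlinearity as $f(x,u)=\mu|u|^{2^{\star}-2}u+|u|^{2^{\star}(t)-2}u\,|x|^{-t}+a(x)u$, its primitive with respect to $u$ is $F(x,u)=\frac{\mu}{2^{\star}}|u|^{2^{\star}}+\frac{1}{2^{\star}(t)}|u|^{2^{\star}(t)}|x|^{-t}+\tfrac12 a(x)u^2$, and since $x\cdot\na_x(|x|^{-t})=-t|x|^{-t}$ one has $x\cdot\na_x F=-\frac{t}{2^{\star}(t)}|u|^{2^{\star}(t)}|x|^{-t}+\tfrac12 u^2\,(x\cdot\na a)$. Using $u=0$ on $\bdw$ (so $\na u=(\pa_\nu u)\nu$ there), the standard manipulation of the Laplacian term gives
\[
\int_{\bdw}(\pa_\nu u)^2\,(x\cdot\nu)\,d\sigma=(2-N)\Iom|\na u|^2+2N\Iom F(x,u)+2\Iom x\cdot\na_x F(x,u).
\]
Because $2^{\star}=\frac{2N}{N-2}$ and $2^{\star}(t)=\frac{2(N-t)}{N-2}$ we have $\frac{2N\mu}{2^{\star}}=(N-2)\mu$ and $\frac{2N-2t}{2^{\star}(t)}=N-2$, so the right-hand side equals
\[
(2-N)\Iom|\na u|^2+(N-2)\mu\Iom|u|^{2^{\star}}+(N-2)\Iom\frac{|u|^{2^{\star}(t)}}{|x|^t}+N\Iom a\,u^2+\Iom u^2\,(x\cdot\na a).
\]
On the other hand, testing \eqref{eq:prob} with $u$ yields $\Iom|\na u|^2=\mu\Iom|u|^{2^{\star}}+\Iom|u|^{2^{\star}(t)}|x|^{-t}+\Iom a\,u^2$. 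Substituting this energy identity into the term $(2-N)\Iom|\na u|^2$, the $|u|^{2^{\star}}$ and $|u|^{2^{\star}(t)}|x|^{-t}$ terms cancel identically --- this is precisely where the double criticality of the two exponents is used --- leaving the clean identity
\[
\int_{\bdw}(\pa_\nu u)^2\,(x\cdot\nu)\,d\sigma=2\Iom u^2\Big(a(x)+\tfrac12\,x\cdot\na a\Big)\,dx.
\]

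To conclude: since $\Omega$ is star-shaped with respect to the origin, $x\cdot\nu\ge0$ on $\bdw$, so the left-hand side is $\ge0$; by hypothesis $a+\tfrac12 x\cdot\na a\le0$, so the right-hand side is $\le0$. Hence both sides vanish. From $\int_{\bdw}(\pa_\nu u)^2(x\cdot\nu)\,d\sigma=0$ and $x\cdot\nu>0$ on $\bdw\setminus\{0\}$ we get $\pa_\nu u=0$ on $\bdw$; combined with $u=0$ on $\bdw$, extending $u$ by zero across $\bdw$ and applying the unique continuation principle to $-\Delta u=V(x)u$, with $V=\mu|u|^{2^{\star}-2}+|u|^{2^{\star}(t)-2}|x|^{-t}+a\in L^{N/2}_{\mathrm{loc}}$ (here $0<t<2$ is used), forces $u\equiv0$ --- a contradiction. (If in addition $a+\tfrac12 x\cdot\na a<0$ on some open set, then vanishing of the right-hand integral already gives $u\equiv0$ there, and unique continuation again finishes.) The delicate point is the rigorous justification of the integration by parts, namely showing the boundary contributions over $\pa B_\rho(0)$ tend to $0$ and that $u$ has enough regularity up to $\bdw$ for $\pa_\nu u$ and the surface integral to make sense; this is handled by the $|x|^{-t}$-weighted estimates coming from \eqref{Sobolev} together with a standard scaling argument near the origin.
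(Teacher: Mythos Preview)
Your proposal is correct and follows essentially the same Pohozaev-identity route as the paper, arriving at the same identity and concluding via unique continuation. The only technical difference is in the handling of the singularity at the origin: the paper regularizes with smooth cut-off functions $\phi_{\epsilon,R}$ (in the spirit of \cite{BS}) and then lets $\epsilon\to 0$, $R\to\infty$, whereas you excise the ball $B_\rho(0)$ and let $\rho\downarrow 0$; these are equivalent standard devices.
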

\begin{proof}
We will prove this using the Pohozaev  identity in the spirit of \cite{BS}. For $\epsilon>0$ and $R>0$, define $\phi_{\epsilon, R}:=\phi_{\epsilon}(x)\psi_R(x)$, where $\phi_{\epsilon}(x)=\phi(\frac{|x|}{\epsilon}), \ \ \psi_R(x):=\psi(\frac{|x|}{R})$, $\phi$ and $\psi$ are smooth functions in $\R$ with the properties $0\leq \phi, \psi\leq 1$, with supports of $\phi, \psi$ in $(1,\infty)$ and $(-\infty, 2)$ respectively and $\phi(t)=1$ for $t\geq 2$ and $\psi=1$ for $t\leq 1$.

Let $u$ be any solution of Equation \eqref{eq:prob}, then $u$ is smooth away from the origin and hence $(x\cdot\na u)\phi_{\epsilon, R}\in C^2_c(\bar\Omega)$. Multiplying Equation \eqref{eq:prob} by $(x\cdot\na u)\phi_{\epsilon, R}$ and integrating by parts we obtain,
\begin{eqnarray}\label{4}
\Iom\na u\na\big((x\cdot\na u)\phi_{\epsilon, R}\big)dx-\int_{\bdw}\frac{\pa u}{\pa\nu}(x\cdot\na u)\phi_{\epsilon,R}dS &=&\mu\Iom|u|^{2^{*}-2}u(x\cdot\na u)\phi_{\epsilon, R}dx \no \\
&+&\Iom\frac{|u|^{2^{*}(t)-2}u}{|x|^t}(x\cdot\na u)\phi_{\epsilon, R}dx \no\\
&+&\Iom a(x)u(x\cdot\na u)\phi_{\epsilon, R}dx
\end{eqnarray}
RHS can be simplified as follows:
\begin{eqnarray}\label{5}
\text{RHS}&=& \frac{\mu}{2^{*}}\Iom\na(|u|^{2^{*}})\cdot x\phi_{\epsilon,R}dx+\frac{1}{2^{*}(t)}\Iom\na(|u|^{2^{*}(t)})\cdot x\frac{\phi_{\epsilon,R}}{|x|^t}dx\no\\
&+&\frac{1}{2}\Iom a(x)\na(|u|^2)\cdot x\phi_{\epsilon,R}dx\no\\
&=&-\mu\displaystyle\left(\frac{N-2}{2}\right)\Iom|u|^{2^{*}}\phi_{\epsilon,R}dx-\frac{\mu}{2^{*}}\Iom|u|^{2^{*}}(x\cdot\na\phi_{\epsilon,R})dx\no\\
&-&\frac{N-2}{2}\Iom\frac{|u|^{2^{*}(t)}}{|x|^t}\phi_{\epsilon,R}dx-\frac{1}{2^{*}(t)}\Iom\frac{|u|^{2^{*}(t)}}{|x|^t}(x\cdot\na\phi_{\epsilon,R})dx\no\\
&-&\frac{N}{2}\Iom a(x)|u|^2\phi_{\epsilon,R}dx-\frac{1}{2}\Iom a(x)|u|^2(x\cdot\na\phi_{\epsilon,R})dx\no\\
&-&\frac{1}{2}\Iom|u|^2(x\cdot\na a)\phi_{\epsilon,R}dx
\end{eqnarray}
As $|x\cdot\na\phi_{\epsilon,R}|=|x(\psi_R\na\phi_{\epsilon}+\phi_{\epsilon}\na\psi_R)|\leq C$, by using dominated convergence theorem, it is easy to check that 
\begin{eqnarray}\label{6}
\lim_{R\to\infty}\lim_{\epsilon\to 0}\text{RHS} &=&-\displaystyle\left(\frac{N-2}{2}\right)\left(\mu\Iom|u|^{2^{*}}dx+\Iom\frac{|u|^{2^{*}(t)}}{|x|^t}dx\right)\no\\
&-&\frac{N}{2}\Iom a(x)u^2dx-\frac{1}{2}\Iom|u|^2(x\cdot\na a)dx.
\end{eqnarray}
Following the calculation in \cite[Theorem 4.1]{BS}, LHS of \eqref{4} can be estimated as
\begin{eqnarray}\label{7}
\text{LHS}&=&-\displaystyle\left(\frac{N-2}{2}\right)\Iom|\na u|^2\phi_{\epsilon,R}dx-\frac{1}{2}\int_{\bdw}\left(\frac{\pa u}{\pa \nu}\right)^2(x\cdot\nu)\phi_{\epsilon,R}dS\no\\
&-&\frac{1}{2}\Iom|\na u|^2(x\cdot\na\phi_{\epsilon,R})dx+\Iom(x\cdot\na u)(\na u\cdot\na\phi_{\epsilon,R})dx
\end{eqnarray}
Here we used the fact that, $x\cdot\na u=x\cdot\nu\frac{\pa u}{\pa\nu}$ on $\bdw$, since $u=0$ on $\bdw$.
First three terms in the right hand side of \eqref{7} can be estimated as before. For the last term we can see that
\begin{eqnarray*}
&&\lim_{R\to\infty}\lim_{\epsilon\to 0}\displaystyle|\Iom(x\cdot\na u)(\na u\cdot\na\phi_{\epsilon,R})dx|\\
&=&\lim_{R\to\infty}\lim_{\epsilon\to 0}|\Iom (x\cdot\na u)\big(\psi_R(\na u\cdot\na\phi_{\epsilon})+\phi_{\epsilon}(\na u\cdot\na\psi_R)\big)dx|\\
&\leq&\lim_{\epsilon\to 0} C_1\int_{\epsilon\leq|x|\leq 2\epsilon}|\na u|^2 dx+\lim_{R\to\infty}C_2\int_{R\leq|x|\leq 2R}|\na u|^2 dx\\
&=&0
\end{eqnarray*}
Therefore from \eqref{7} we get,
\begin{equation}\label{8}
\lim_{R\to\infty}\lim_{\epsilon\to 0}LHS=-\displaystyle\left(\frac{N-2}{2}\right)\Iom|\na u|^2dx-\frac{1}{2}\int_{\bdw}\left(\frac{\pa u}{\pa \nu}\right)^2(x\cdot\nu)dS
\end{equation}
combining \eqref{6} and \eqref{8}, and using Equation \eqref{eq:prob}, we obtain
$$-\Iom\displaystyle\left(a(x)+\frac{1}{2}
x\cdot\na a\right)|u|^2 dx+\frac{1}{2}\int_{\bdw}\left(\frac{\pa u}{\pa \nu}\right)^2(x\cdot\nu)dS=0.$$
Since $\Omega$ is star shaped with respect to the origin, 2nd term in the LHS of the above expression is nonnegative and by the assumptions of this Theorem, the 1st term is also nonnegative. hence by the principle of unique continuation $u=0$ in $\Omega$. This completes the proof.
\hfill$\square$
\end{proof}

\vspace{2mm}

{\bf Acknowledgment}: This work is supported by INSPIRE research grant 
DST/INSPIRE 04/2013/000152.

\label{References}
\footnotesize

\end{document}